\def\today{\ifcase\month\or
  January\or February\or March\or April\or May\or June\or
  July\or August\or September\or October\or November\or December\fi
  \space\number\day, \number\year}
\newtheorem{theorem}{Theorem}
\newtheorem{lemma}{Lemma}
\newtheorem{proposition}{Proposition}
\newtheorem{corollary}{Corollary}
\newcommand{\C}{\mathcal{C}}
\renewcommand{\H}{\mathcal{H}}
\renewcommand{\L}{\mathcal{L}}
\renewcommand{\P}{\mathcal{P}}
\newcommand{\Q}{\mathcal{Q}}
\newcommand{\R}{\mathcal{R}}
\newcommand{\Z}{\mathcal{Z}}
\newcommand{\QQ}{\mathfrak{Q}}
\renewcommand{\r}{\mathbb{R}}
\newcommand{\cp}{\mathbb{C}} 
\newcommand{\re}{{\rm Re}\,}
\newcommand{\ft}{\widehat}
\newcommand{\bo}{\boldsymbol}
\newcommand{\bn}{{\bo n}}
\newcommand{\bm}{{\bo m}}
\newcommand{\om}{\omega}
\newcommand{\la}{\lambda}
\newcommand{\al}{\alpha}
\newcommand{\ep}{\varepsilon}
\newcommand{\dist}{{\rm dist}}
\renewcommand{\phi}{\varphi}
\newcommand{\1}{\mathbf{1}}
\renewcommand{\d}{\,{\rm d}}  
\def\be{\begin{equation}}  
\def\ee{\end{equation}}
\def\R{\mathbb R}  \def\Q{\mathbb Q} \def\C{\mathbb C} \def\Z{\mathbb Z} 
\def\a{\alpha} \def\b{\beta} \def\l{\lambda} \def\L{\Lambda} \def\G{\Gamma} \def\QQ{\mathcal Q}
\def\={\;=\;} \def\+{\;+\;} \def\wt{\widetilde}
\renewcommand{\k}{\kappa}
\def\sp{\vphantom{\bigl|^\lambda}}
\def\R{\mathbb R}  \def\Q{\mathbb Q} \def\C{\mathbb C} \def\Z{\mathbb Z} 
\def\a{\alpha} \def\b{\beta} \def\l{\lambda} \def\L{\Lambda}
\def\={\;=\;} \def\+{\;+\;} \def\wt{\widetilde}
\renewcommand{\k}{\kappa}
\def\sp{\vphantom{\bigl|^\lambda}}
\definecolor{RED}{rgb}{1,0,0}\definecolor{BLUE}{rgb}{0,0,1} 
\lstdefinelanguage{DIFcode}{ 
  moredelim=[il][\color{red}\sout]{\%DIF\ <\ }, 
  moredelim=[il][\color{blue}\uwave]{\%DIF\ >\ } 
} 
\lstdefinestyle{DIFverbatimstyle}{ 
	language=DIFcode, 
	basicstyle=\ttfamily, 
	columns=fullflexible, 
	keepspaces=true 
} 
\begin{document}
\setlength{\parskip}{2pt}
\setlength{\parindent}{16pt}

\title[]{Strichartz estimates with broken symmetries}
\author[Gon\c{c}alves and Zagier]{Felipe Gon\c{c}alves and Don Zagier}
\subjclass[2010]{}
\keywords{}
\address{Hausdorff Center for Mathematics,
Endenicher Allee 60, 53115 Bonn, Germany}
\email{goncalve@math.uni-bonn.de}
\address{Max Planck Institute for Mathematics, Vivatsgasse 7, 53111 Bonn, Germany \newline
 \phantom{Int}International Centre for Theoretical Physics, Strada Costiera 11, 34151 Trieste, Italy}
\email{dbz@mpim-bonn.mpg.de}
\allowdisplaybreaks
\maketitle

\begin{abstract}
In this note we study the eigenvalue problem for a quadratic form associated with Strichartz 
estimates for the Schr\"odinger equation, proving in particular a sharp Strichartz inequality
for the case of odd initial data.  We also describe an alternative method that is applicable to 
a wider class of matrix problems.
\end{abstract}

\section{Introduction}
In this paper we study the eigenvalue problem associated with the quadratic form
\footnote{The $\sqrt{12}$ normalization factor has been included here for aesthetic reasons only.}
\begin{align}\label{def:Q}
Q(f) \ =\sqrt{12}\int_{\r} \int_{\r} |u_f((y,y,y),t)|^2 \d y \d t 
\end{align}
restricted to the subspace of functions $f\in L^2(\r^3)$ with some prescribed parity.  Here 
$u_f: \r^3\times \r\to \cp$ denotes the solution of the Schr\"odinger equation
\be \label{Schroed} \partial_t u(x,t) \,=\, i\Delta u(x,t)\,, \qquad  u(x,0)\,=\,f(x)   \ee
with initial data~$f$ and we say that $f(x_1,x_2,x_3)$ has parity $\ep\in\{\pm 1\}^3$ if it is even with respect
to the $x_i$ with $\ep_i=1$ and odd with respect to the $x_i$ with $\ep_i=-1$. {Enforcing such parity constraints on the 
initial data breaks the fundamental symmetries associated with the Schr\"odinger equation (for instance Galilean invariance).}
The study of such quadratic form is motivated by its intrinsic relation with Strichartz estimates, 
as it was used in~\cite{Ca,HZ} to produce sharp bounds for the space-time $L^6$-norm of $u_f$ in one space dimension.
For a more general overview of how results of this sort are used we refer to the recent survey \cite{FO}.

Since the form~$Q$ is invariant under permutation of the coordinates, we can restrict to the case where
$\ep_i$ is $-1$ for $i\le\k$ and $+1$ for $i>\k$ for some $\k\in\{0,1,2,3\}$, i.e., to
the subspace $L_\k^2(\R^3)\subset L^2(\r^3)$ of functions that are odd with respect to the first~$\k$
variables and even with respect to the others.
It turns out, as we will show, that the eigenvalues associated to the restriction of~$Q$ to each $L_\k^2(\R^3)$ are 
all {\it rational} and can be given explicitly as the coefficients of certain algebraic generating functions 
(which was amusing and unexpected for the authors). Specifically, define four power series 
$G_\k(w)\in\Q[[w]]$ by 
\be\label{G0G1G2G3}
\begin{pmatrix} G_0(w) \\ G_1(w)\sp \\ G_2(w)\sp\\ G_3(w)\sp \end{pmatrix}
 \=  \begin{pmatrix} 1&3&3 \\ w&1&-1\sp \\ 1&-1&-1\sp \\ w&-3&3\sp \end{pmatrix}\,
  \begin{pmatrix}\frac1{4(1-w^2)}\\ \frac1{8\sqrt{1-\frac23w+w^2}}\sp  \\ \frac1{8\sqrt{1+\frac23w+w^2}}\sp \end{pmatrix}
\= \begin{pmatrix}1 + \frac7{27}w^4 + \frac{32}{81}w^6+\cdots \\ \frac13w+\frac4{27}w^3+\frac13w^5+\cdots\sp \\
\frac13w^2+\frac{20}{81}w^4+\frac{49}{243}w^6+\cdots\sp \\ \frac59w^3+\frac{91}{243}w^7+\frac{1760}{6561}w^9+\cdots\sp\end{pmatrix}
\ee
and denote by $\L_\k\subset\Z\bigl[\frac13\bigr]$ the multiset of coefficients of~$G_\k(w)$, counted with multiplicity.  
Since\footnote{Here we use the standard notation $\bigl[w^n\bigr](F)=c_n$ if $F(w)=\sum_{n=0}^\infty c_n w^n$.} $\bigl[w^n\bigr](G_\k)=\frac14 +O(n^{-1/2})$ for $n\equiv\k\pmod2$ (see Figure \ref{plot}) and
vanishes otherwise, the only non-zero limit point of~$\L_\k$ is $\frac14$ and this is also the only value
that can have infinite multiplicity\footnote{This could happen only
if $\frac14$ occurred infinitely often as a Taylor coefficient of~$G_\k$. In fact, numerical computations 
up to $n=10^5$ suggest that it never occurs at all, but we could not prove this.}. Our main result is the following.

\begin{figure*}
\centering
\begin{tikzpicture}[scale=1]
\begin{axis}[
xlabel = $n$,
ytick={0,.25,0.5,0.75,1.0},
]
\addplot[
    domain=1:500,
	only marks,
	mark size=0.5pt,
]
table {G0.data};
\end{axis}
\end{tikzpicture}
~
\begin{tikzpicture}[scale=1]
\begin{axis}[
xlabel = $n$,
ymax=.35, ymin=0.15,
ytick={0.2,.25,0.3},
]
\addplot[
   domain=1:500,
	only marks,
	mark size=0.5pt,
]
table {G1.data};
\end{axis}
\end{tikzpicture}
\caption{First $501$ nonzero power series coefficients of $G_{0}$ (left) and $G_{1}$ (right).}
\label{plot}
\end{figure*}
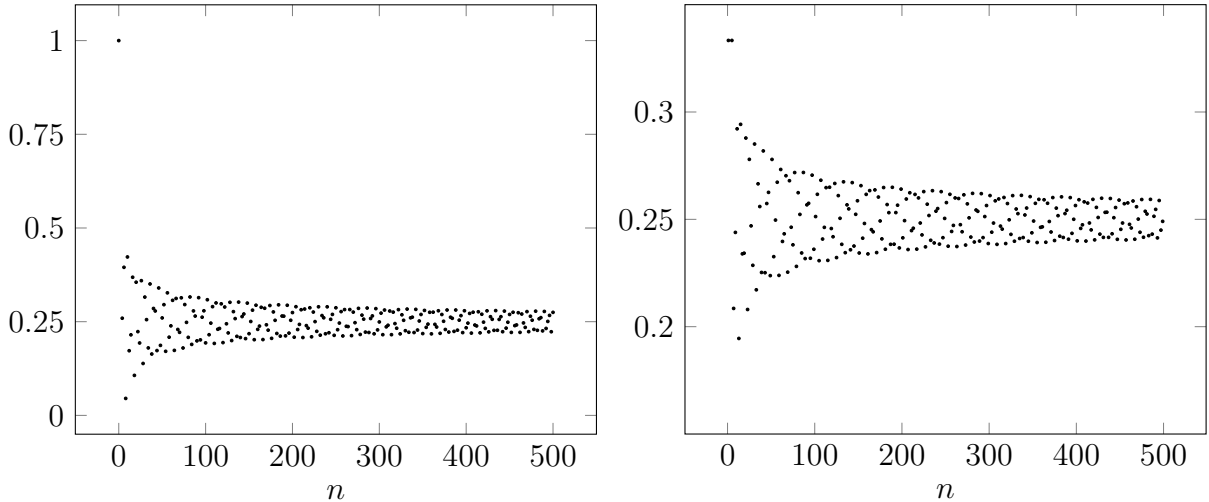

\begin{theorem}\label{thm:paritymain}
For each $\k=0,1,2,3$ there is an orthogonal decomposition 
$$ L^2_\k(\r^3)\=\bigoplus_{\la \in \Lambda_\k}  V_{\k}(\la)\,, $$
that diagonalizes $Q$ such that $Q(f)=\l\|f\|^2_{L^2(\r^3)}$ for $f\in V_\k(\l)$.  In particular, the largest eigenvalue 
$\al_\k=\max(\Lambda_\k)$ of~$Q$ on $L^2_\k$ is given explicitly by~$\a_0=1$, $\a_1=\a_2=\frac13$, and $\a_3=\frac59$. 
The eigenspaces $V_\k(\a_\k)$ corresponding to these largest eigenvalues are given explicitly by
\begin{align*}
V_0(1) & \=  \left\{f\in L^2(\r^3): f(x)=g(|x|)\right\}, \\
V_1(\tfrac13) & \= \left\{f\in L^2(\r^3): f(x)=x_1g(|x|)+x_1P(x_1,x_2,x_3)h(|x|)\right\}, \\
V_2(\tfrac13) & \= \left\{f\in L^2(\r^3): f(x)=x_1x_2 g(|x|)\right\}, \\
V_3(\tfrac59) & \=\left\{f\in L^2(\r^3): f(x)=x_1x_2x_3g(|x|)\right\},
\end{align*}
where $P(x_1,x_2,x_3)\,=\,x_1^4 - 5(x_2^2 + x_3^2)x_1^2 +15 (x_2^4 - 5x_3^2x_2^2 + x_3^4)$.
\end{theorem}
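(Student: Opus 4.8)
The plan is to rewrite $Q$, via Plancherel, as the quadratic form of an extremely rigid self-adjoint operator --- essentially a one-parameter rotation average --- and then to diagonalize it using spherical harmonics and the Legendre addition theorem. \emph{Step 1 (reduction to a rotation average).} Writing $u_f(x,t)=\int_{\r^3}\ft f(\xi)e^{2\pi i x\cdot\xi-4\pi^2 i|\xi|^2t}\d\xi$, restricting to the diagonal $x=(y,y,y)$ and integrating successively in $y$ and in $t$ imposes the two constraints $\xi_1+\xi_2+\xi_3=\eta_1+\eta_2+\eta_3$ and $|\xi|^2=|\eta|^2$; together these say precisely that $\xi$ and $\eta$ lie on one common circle obtained by rotating about the axis $e=\tfrac1{\sqrt3}(1,1,1)$. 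Tracking the induced densities (here the normalising $\sqrt{12}$ enters, precisely so as to make the constant below equal $1$) gives, first for Schwartz $f$ where the restriction to the diagonal is classical and then for all $f\in L^2$ by density,
\[
Q(f)\=\langle\Pi\ft f,\ft f\rangle_{L^2(\r^3)}\=\|\Pi\ft f\|_2^2,
\]
where $\Pi$ is the orthogonal projection of $L^2(\r^3)$ onto the functions invariant under the rotations about the $e$-axis. Hence, since the Fourier transform maps each $L^2_\k(\r^3)$ onto itself, the restriction of $Q$ to $L^2_\k$ is the form of the compressed operator $\Pi_\k\Pi\Pi_\k$, where $\Pi_\k=\prod_{i\le\k}\tfrac{1-R_i}{2}\prod_{i>\k}\tfrac{1+R_i}{2}$ and $R_i$ is the reflection $\xi_i\mapsto-\xi_i$.

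\emph{Step 2 (diagonalization).} Decompose $L^2(\r^3)=\bigoplus_{\ell\ge0}L^2\bigl((0,\infty),r^2\d r\bigr)\otimes\H_\ell$ into radial part times spherical harmonics. Both $\Pi$ and all $R_i$ fix $|\xi|$ and the degree $\ell$, so everything splits over $\ell$ with the radial factor inert. On the irreducible $SO(3)$-module $\H_\ell$ the averaging projection $\Pi$ has rank one, its image being the line $\r v_\ell$ spanned by the zonal harmonic $v_\ell(\xi)=|\xi|^\ell P_\ell(\hat\xi\cdot e)$, with $P_\ell$ the Legendre polynomial. Therefore on $\H_\ell^{(\k)}:=\Pi_\k\H_\ell$ the operator $\Pi_\k\Pi\Pi_\k$ equals $\|v_\ell\|^{-2}(\Pi_\k v_\ell)(\Pi_\k v_\ell)^{*}$, of rank $\le1$, with single nonzero eigenvalue $\mu_\ell^{(\k)}=\|\Pi_\k v_\ell\|^2/\|v_\ell\|^2=\langle\Pi_\k v_\ell,v_\ell\rangle/\|v_\ell\|^2$ (when $\Pi_\k v_\ell\neq0$) on the line $\r\,\Pi_\k v_\ell$, and $0$ on its orthocomplement in $\H_\ell^{(\k)}$. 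Taking the direct sum of the spaces $L^2\bigl((0,\infty),r^2\d r\bigr)\otimes\r\,\Pi_\k v_\ell$ over all $\ell$, together with the remaining ($0$-eigenvalue) subspace, and transporting everything back through the Fourier transform --- which by Hecke--Bochner sends $(\mathrm{radial})\times(\text{harmonic of degree }\ell)$ to $(\mathrm{radial})\times(\text{same harmonic})$ --- produces the orthogonal decomposition of the statement, the degree-$\ell$ piece carrying the eigenvalue $\mu_\ell^{(\k)}$.

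\emph{Step 3 (computing $\mu_\ell^{(\k)}$, the generating functions, and the extremal eigenspaces).} Expand $\Pi_\k=\tfrac18\sum_{S\subseteq\{1,2,3\}}(-1)^{|S\cap\{1,\dots,\k\}|}R_S$. Since $R_S v_\ell$ is the zonal harmonic about the reflected axis $R_S e$, the addition theorem gives $\langle R_S v_\ell,v_\ell\rangle=\tfrac{4\pi}{2\ell+1}P_\ell(e\cdot R_S e)$ and $\|v_\ell\|^2=\tfrac{4\pi}{2\ell+1}$, with $e\cdot R_S e=1-\tfrac23|S|$; grouping the subsets by size $m=|S|$ turns the signed count into $[x^m](1-x)^{\k}(1+x)^{3-\k}$ and yields
\[
\mu_\ell^{(\k)}\=\frac18\sum_{m=0}^{3}\Bigl([x^m](1-x)^{\k}(1+x)^{3-\k}\Bigr)\,P_\ell\!\Bigl(1-\tfrac{2m}{3}\Bigr).
\]
Summing against $w^\ell$ with $\sum_{\ell\ge0}P_\ell(z)w^\ell=(1-2zw+w^2)^{-1/2}$ (the values $z=\pm1$ degenerating to $(1\mp w)^{-1}$, which combine into $\tfrac1{4(1-w^2)}$ for $\k$ even and $\tfrac{w}{4(1-w^2)}$ for $\k$ odd) reproduces exactly the matrix identity defining $G_\k$, so $\mu_\ell^{(\k)}=[w^\ell]G_\k$ and $\L_\k$ is the eigenvalue multiset. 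Finally $\mu_\ell^{(\k)}=\tfrac14+c_\k P_\ell(1/3)$ for $\ell\equiv\k\pmod 2$ (and $=0$ otherwise), with $(c_0,c_1,c_2,c_3)=(\tfrac34,\tfrac14,-\tfrac14,-\tfrac34)$; optimizing over the relevant parity class gives $\a_0=1$ at $\ell=0$, $\a_1=\tfrac13$ at $\ell\in\{1,5\}$, $\a_2=\tfrac13$ at $\ell=2$, $\a_3=\tfrac59$ at $\ell=3$, and the projected zonals there are $\Pi_0 v_0=\1$, $\Pi_1 v_1\propto\xi_1$, $\Pi_1 v_5\propto\xi_1 P(\xi)$ with $P$ the quartic of the statement (one checks $\xi_1 P$ is harmonic and that it is the parity component of $v_5$), $\Pi_2 v_2\propto\xi_1\xi_2$, $\Pi_3 v_3\propto\xi_1\xi_2\xi_3$; pulling these back through the Fourier transform gives the claimed $V_\k(\a_\k)$.

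The main obstacle, and the only genuinely non-formal point, is pinning down $\a_1,\a_2,\a_3$: one must show that $P_\ell(1/3)$ attains its extremum along each parity class at the claimed small index --- in particular $P_\ell(1/3)\le\tfrac13$ for all odd $\ell$ (with the accidental further equality at $\ell=5$) and $P_\ell(1/3)\ge-\tfrac{11}{27}$ for all odd $\ell$. I would obtain this from a classical decay bound such as $|P_\ell(\cos\theta)|\le(2/(\pi\ell\sin\theta))^{1/2}$, which forces $|P_\ell(1/3)|$ below the relevant threshold once $\ell$ exceeds a small explicit constant, combined with direct evaluation of the finitely many remaining Legendre values. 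Beyond that, the only computation of real substance is the explicit identification of $\Pi_1 v_5$, a finite linear-algebra exercise inside $\H_5$; and the Step-1 reduction requires the usual care with the $2\pi$-factors and with justifying the pointwise trace onto the diagonal on a dense class.
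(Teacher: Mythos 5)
Your proposal is correct and follows essentially the same route as the paper: the identity $Q(f)=\|P_E\ft f\|^2$ (which the paper imports from Carneiro's Lemma 6 rather than re-deriving), the spherical-harmonic decomposition into rank-one blocks for the compressed projection $P_\k P_E P_\k$ with eigenvalue given by a signed average of Legendre values at $(\ep_1+\ep_2+\ep_3)/3$, the generating-function identity \eqref{G0G1G2G3}, and the identification of the maximum via the $|P_\ell(1/3)|\lesssim \ell^{-1/2}$ decay bound plus a finite check. Your reformulation $\mu_\ell^{(\k)}=\tfrac14+c_\k P_\ell(1/3)$ and the use of the addition theorem are clean but equivalent repackagings of the paper's Lemmas \ref{prop:Teigenvectors}--\ref{lem:boundslambda}.
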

The proof will actually identify the set of eigenvalues of~$Q$ on $L^2_\k$ with~$\L_\k$ as
a multiset, in the sense that each $V_\k(\l)$ is a module over the set $V_0(1)$ of radial $L^2$-functions 
of rank equal to the multiplicity of~$\l$ in~$\L_\k$ (and hence finite for $\l$ different from~0 and
possibly~$\frac14$). The fact that $V_1(\tfrac13)$ looks ``larger" than the other three $V_\k(\al_\k)$ is then
simply a reflection of the fact that $\frac13$ appears twice as a coefficient in the power series~$G_1(w)$.

Theorem 1 gives not only the largest eigenvalue, but the whole spectrum for each $\k$. In particular, one can check by a numerical computation (Lemma \ref{lem:boundslambda} below) that the second largest eigenvalue $\b_\k$ of $Q$ on $L^2_\k$ is given by
$$
(\b_0,\b_1,\b_2,\b_3)=\bigg(\frac{8320}{3^9},\frac{469136}{3^{13}},\frac{232}{3^6},\frac{221312}{3^{12}}\bigg).
$$
Note that $\a_\k>\beta_\k$ for $\k=0,1,2,3$. From Theorem~\ref{thm:paritymain} we establish a generalised version of a conjecture of the first author \cite[Conjecture 1]{Gon2} and, by restricting to tensor products, we also produce a sharp trilinear inequality.

\begin{corollary}\label{Cor2}
Let $\k=0,1,2,3$. We have
\[
\sqrt{12}\int_{\r} \int_{\r} |u_f((y,y,y),t)|^2 \d y \d t  \leq \alpha_\k \int_{\r^3}|f(x)|^2dx-(\a_\k-\beta_\k) \dist(f,V_\k(\al_k))^2
\]
for every $f \in L^2_{\k}(\r^3)$, with $\alpha_\k$, $\beta_\k$ and $V_\k(\alpha_\k)$ as above. Moreover, the constants $\alpha_\k$ and $\a_\k-\beta_\k$ above  are optimal. Furthermore,  if $f=g_1 \otimes g_2 \otimes g_3$ then
\begin{align*} 
   \sqrt{12}\int_{\r} \int_{\r} \;\bigl|u_{g_1}(y,t)u_{g_2}(y,t)u_{g_3}(y,t)\bigr|^2 \d y \d t  \leq\; \al_\k \prod_{j=1}^3 \int_{\r} |g_j(x)|^2 \d x
 \end{align*}
with equality if (and only if) $g_j(x)=b_jxe^{-ax^2}$ for $j\leq \k$ and $g_j(x)=b_je^{-ax^2}$ for $j>\k$ 
for some $a,\,b_j\in\C$ with $\re a>0$.
\end{corollary}

Here we have used the fact that the function $g_1\otimes g_2 \otimes g_3$ belongs to $V_\k(\al_k)$ 
if and only if  each $g_j(x)$ or $g_j(x)/x$, depending whether $g_j$ is even or odd, is a multiple of the same
Gaussian for $j=1,2,3$. (We leave the details to the reader). 

By setting $g_1=g_2=g_3$ in Corollary~\ref{Cor2} for an odd function $g_1$
we obtain the following sharp inequality for odd initial data.

\begin{corollary}\label{cor2}
Let $f\in L^2(\r)$ be an odd function and let $u_f:\r\times \r\to \cp$ solve the Schr\"odinger 
equation~\eqref{Schroed} with initial data $f$. Then
\begin{align}\label{eq:sharper66odd}
 \|u_f\|_{L^{6}(\r\times \r)} \;\,\leq\;\, 12^{-1/12}\,\bigl(5/9\bigr)^{1/6}\,\|f\|_{L^2(\r)}\;,
 \end{align}
with equality if and only if $f(x)=bxe^{-ax^2}$ with $a,b\in\C$ and $\,\re a>0\,$.
\end{corollary}

We add a few more remarks
to clarify the relation of the results here to earlier ones
in the literature. Ozawa and Tsutsumi proved inequality \eqref{eq:sharper66odd} with no parity condition on $g$ and 
optimal constant $12^{-\frac{1}{12}}$, which is attained by Gaussians. Foschi~\cite{Fo} provided an alternative proof that also characterized Gaussians as the only extremizers. Corollary \ref{cor2} is a sharpening of this inequality when the initial data is odd. 

One may wonder if our techniques can be adapted to the case $k=2$ in \eqref{eq:stric-est} below. After tensorization, this leads us to consider the quadratic form 
$$
Q_2(f)=4\int_{\r}\int_{\r^2} |u_f(x_1,x_2,x_1,x_2,t)|^2 dx_1dx_2dt
$$
for $f:\r^4\to \cp$ and we can ask about the spectral decomposition of such quadratic form under parity constraints on $f$ (this was first considered in \cite[Conjecture 2]{Gon2} in the even case). By applying the general form of \cite[Lemma 6]{Ca}, this quadratic form can be identified as the $L^2(\r^4)$-norm of the projection of $f$ onto the subspace of functions invariant under rotations preserving the directions $(1,0,1,0)$ and $(0,1,0,1)$. The complication arises since the the restriction of such projection to a parity space is no longer diagonal, but only \emph{block} diagonal and we could not recognize the eigenvalues of such blocks in explicit analytic form. We leave this question to future work. Nevertheless, the first author has investigated the quadratic form $Q_2$ in \cite{Gon2} under the constraint that $f(x_1,x_2,x_3,x_4)$ depends only on $x_1^2+x_2^2$ and $x_3^2+x_4^2$. Indeed, using the method of  Section \ref{sec:moregen} one can reprove \cite[Theorem 3]{Gon2} (with $\gamma=3/4$ as it was conjectured; we omit the details).

\subsection{Background}
For a given $f\in L^2(\r^k)$ we denote by $u_f(x,t)$ the solution of the Schr\"odinger equation~\eqref{Schroed} 
with initial data $f$. The Strichartz inequality (see~\cite[Theorem 2.3]{Tao}) states that there exists~$A>0$
such that
\begin{equation}\label{eq:stric-est}
\|u_f\|_{L^{p}(\r^k\times \R)} \;\leq\; A\,\|f\|_{L^2(\r^k)}\,,
\end{equation}
where $p=2+4/k$. It is conjectured that Gaussians are the only maximizers of this inequality, that is, 
letting $A$ be the best possible constant, the inequality is attained if and only if $f(x)=c\exp(-a|x|^2 +bx)$, 
where $c,a\in \cp$, $b\in \cp^k$ and $\re a>0$. In lower dimensions $k=1,2$ this conjecture was solved by 
Foschi~\cite{Fo} and alternative approaches were given in~\cite{BBCH,Ca,Gon,Gon2,HZ,OT}.  The results 
of~\cite{Ca} are of special interest to this paper. By embedding the problem into a higher dimensional 
version and using the delta calculus tool, Carneiro~\cite[Lemma 6]{Ca} proved the following useful 
representation\footnote{Generalizing the work of~\cite{HZ}.}.

\begin{proposition} \label{prop:ufPEidentity}
If $u_f:\r^{d}\times \r\to \cp$ solves Schr\"odinger's equation with initial data \hbox{$f\in L^2(\r^{d})$} then
\be\label{eq:projrepres}
  c_d\,\int_\r \int_\r \,\bigl|u_f((y,\dots,y),t)\bigr|^2\d y\,\d t 
   \;=\,\int_{\R^d} \;\bigl|P_{E}\bigl(w_d\ft f\,\bigr)(x)\bigr|^2 \d x\,,
\ee
where $E\subset L^2(\r^{d})$ is the subspace of functions invariant under rotations fixing the direction 
$\1=\frac{1}{\sqrt{d}}(1,\dots,1)$, $P_{E}$ is the orthogonal projection onto $E$, 
  $$ w_d(x)\=\left(|x|^2 - (x\cdot \1)^2\right)^{\frac{d-3}{4}}\,,$$
$\ft f$ is the Fourier transform of $f$ defined by
  $$ \ft f(\xi) \= (2\pi)^{-d/2}\int_{\r^d}f(x)e^{-i\xi\cdot x}\d x\,, $$
and
  $$ c_d\= \sqrt{d}\; 2^{d-2}\,\pi^{(d-3)/2}\,\Gamma((d - 1)/2)\,. $$
\end{proposition}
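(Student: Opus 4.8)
The plan is to realize the left-hand side of \eqref{eq:projrepres} as a two-dimensional Plancherel identity. Writing the Schr\"odinger evolution via Fourier inversion, $u_f(x,t)=(2\pi)^{-d/2}\int_{\r^d}\ft f(\xi)\,e^{i\xi\cdot x-it|\xi|^2}\d\xi$, and restricting to the diagonal $x=(y,\dots,y)=\sqrt d\,y\,\1$ (note $|\1|=1$), one gets
$$ u_f\bigl((y,\dots,y),t\bigr)\=(2\pi)^{-d/2}\int_{\r^d}\ft f(\xi)\,e^{iy\,r(\xi)-it\,s(\xi)}\,\d\xi\,,\qquad s(\xi):=|\xi|^2,\ \ r(\xi):=\sqrt d\,(\1\cdot\xi)\,. $$
As a function of $(y,t)\in\r^2$ this is thus a multiple of the two-dimensional inverse Fourier transform of the push-forward measure $\Phi_*(\ft f\,\d\xi)$ under the map $\Phi=(s,r)\colon\r^d\to\r^2$. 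The first step is therefore to compute the density $H(s,r)$ of this push-forward; then Plancherel on $\r^2$ gives at once $\int_\r\int_\r|u_f((y,\dots,y),t)|^2\d y\d t=(2\pi)^{2-d}\,\|H\|_{L^2(\r^2)}^2$.

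To compute $H$ I would use the coordinates $\xi=u\1+\rho\,\omega$ with $u=\1\cdot\xi\in\r$, $\rho\ge 0$, and $\omega$ ranging over the unit $(d-2)$-sphere of the hyperplane $\1^\perp$, for which $\d\xi=\rho^{d-2}\,\d u\,\d\rho\,\d\omega$. Passing from $(u,\rho)$ to $(s,r)=(u^2+\rho^2,\sqrt d\,u)$ has Jacobian $2\sqrt d\,\rho$, hence $\d\xi=\tfrac1{2\sqrt d}\,\rho^{d-3}\,\d s\,\d r\,\d\omega$ on the region $\{s>r^2/d\}$. Two observations then close the loop. First, along a fiber of $\Phi$ one has $\rho^2=|\xi|^2-(\1\cdot\xi)^2$, so $\rho^{d-3}=w_d(\xi)^2$, and the fiber is exactly an orbit of the group of rotations fixing $\1$; consequently the integral of $\ft f$ over the fiber equals $\sigma_{d-2}\,(P_E\ft f)(\xi)$, where $\sigma_{d-2}$ denotes the surface area of the unit $(d-2)$-sphere and the value of $P_E\ft f\in E$ is well defined per fiber. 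Therefore $H(s,r)=\tfrac{\sigma_{d-2}}{2\sqrt d}\,w_d(\xi)^2\,(P_E\ft f)(\xi)$ on $\{s>r^2/d\}$.

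Feeding this back and changing variables once more from $(s,r,\omega)$ to $\xi$ — now using that the integrand is constant on fibers, so after integrating out $\omega$ one has $\d s\,\d r=\tfrac{2\sqrt d}{\sigma_{d-2}}\,w_d(\xi)^{-2}\,\d\xi$ — yields
$$ \|H\|_{L^2(\r^2)}^2 \= \frac{\sigma_{d-2}}{2\sqrt d}\int_{\r^d} w_d(\xi)^2\,|(P_E\ft f)(\xi)|^2\,\d\xi\,. $$
Finally, since $w_d$ depends only on $|\xi|^2$ and $\1\cdot\xi$ it is itself invariant under the rotations defining $E$, so $w_d\,P_E\ft f=P_E(w_d\ft f)$ and $w_d^2\,|P_E\ft f|^2=|P_E(w_d\ft f)|^2$. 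Combining the last three displays gives \eqref{eq:projrepres} with $c_d^{-1}=(2\pi)^{2-d}\sigma_{d-2}/(2\sqrt d)$; inserting $\sigma_{d-2}=2\pi^{(d-1)/2}/\Gamma((d-1)/2)$ reproduces exactly the stated value $c_d=\sqrt d\,2^{d-2}\pi^{(d-3)/2}\Gamma((d-1)/2)$.

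The whole argument is a formal manipulation that becomes rigorous once the integrals are known to converge absolutely and Fubini applies, so I would first establish the identity for $f$ in the Schwartz class — where $u_f$ is smooth, every integral is absolutely convergent, and $\Phi_*(\ft f\,\d\xi)$ is a genuine function of $(s,r)$ away from the axis $\{s=r^2/d\}$ — and then extend by density using the $L^2$-control present on both sides. The only genuinely delicate point is the justification of the exchange of the $(y,t)$-integration with the $\xi$-integration and of the fiber-averaging formula for $H$ near the degenerate locus $\rho=0$; in our eventual case $d=3$ one has $w_3\equiv 1$ and this difficulty disappears entirely. (An essentially equivalent route is Carneiro's ``delta calculus'': expand $|u_f|^2$, integrate in $t$ and $y$ to produce the factor $2\pi\,\delta(|\xi|^2-|\eta|^2)\cdot\tfrac{2\pi}{\sqrt d}\,\delta(\1\cdot\xi-\1\cdot\eta)$, and recognize the resulting fiber-restricted double integral as $\|P_E(w_d\ft f)\|_{L^2}^2$.)
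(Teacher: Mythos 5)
Your argument is correct, and it checks out quantitatively: with $\xi=u\1+\rho\omega$ one has $d\xi=\tfrac1{2\sqrt d}\rho^{d-3}\,\d s\,\d r\,\d\omega$, the fiber average is $(P_E\ft f)$ because the orthogonal projection onto rotation-invariant functions coincides with the group average, and the constant $(2\pi)^{d-2}\cdot 2\sqrt d/\sigma_{d-2}$ with $\sigma_{d-2}=2\pi^{(d-1)/2}/\Gamma((d-1)/2)$ is exactly the stated $c_d$. The paper itself does not prove this proposition but quotes it from Carneiro (Lemma~6 of the cited reference), whose derivation is the ``delta calculus'' computation you mention in your closing parenthesis: expand $|u_f|^2$, integrate in $(y,t)$ to produce delta functions supported on the fibers of $(|\xi|^2,\1\cdot\xi)$, and resum. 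Your route — viewing $u_f$ on the diagonal as a two-dimensional Fourier transform of the push-forward density $H$ and applying Plancherel on $\r^2$ — is a rigorous and self-contained repackaging of that same computation; what it buys is that the singular ``delta'' manipulations are replaced by an honest change of variables, at the cost of having to discuss the degenerate locus $\rho=0$ (harmless for $d\ge3$, and vacuous for the case $d=3$ actually used in the paper). Your density/Schwartz-class remark correctly handles the remaining Fubini issues.
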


Notice when $d=3$ and $f(x)=g(x_1)g(x_2)g(x_3)$ for some function $g\in L^2(\r)$ we have $u_f((y,y,y),t)=u_g(y,t)^3$, 
$w=1$ and $c_d={\sqrt{12}}$, and thus we obtain
\be
\sqrt{12}\int_{\r}\int_{\r} |u_g(y,t)|^6\d y\d t  
 \;=\, \int_{\r^{3}} |P_{E}(\ft f\,)(x)|^2 \d x \;\leq \int_{\r^{3}} |\ft f (x)|^2 \d x 
 \,=\,\biggl(\int_{\r} |g(x)|^2 \d x\biggr)^3.
\ee
This is the  sharp Strichartz inequality \eqref{eq:stric-est} for $k=1$. Note that equality 
is attained if and only if $\ft f \in E$, which in turn is equivalent to $g$ being a Gaussian\footnote{This 
is the proof method of~\cite{HZ} in a nutshell, which was later generalized by Carneiro~\cite{Ca}.}. Also note that the left hand side of \eqref{eq:projrepres} for $d=3$ is the quadratic form $Q(f)$ defined in \eqref{def:Q}.

\section{Proofs}

In this section we collect the necessary facts to prove our main result. Since in most part our techniques
are general, we will work with a higher dimensional analogue of $Q$ and later on specialize to dimension ~3. 

Let $\H_n$ denote the space of spherical harmonics of degree $n$ on the $(d-1)$-dimensional unit sphere 
$\mathbb{S}^{d-1}$ (i.e., the restrictions to $\mathbb{S}^{d-1}$
of degree~$n$ homogeneous polynomials annihilated by~$\Delta$).  Let also
$$
\H_n \otimes {\rm Radial}\=\bigl\{f\in L^2(\r^d): f(x)=Y_n(x/|x|)\,g(|x|) \text{ and } Y_n\in \H_n\bigr\}\,. 
$$
It is well known that 
$$
L^2(\r^d)\=\bigoplus_{n\geq 0} \H_n \otimes {\rm Radial},
$$
where the sum is orthogonal, that is, any function $f\in L^2(\r^d)$ can be expanded in the form
\begin{equation}\label{eq:fharmonicexpansion}
f(x)\=\sum_{n\geq 0} Y_n(x/|x|)\,g_n(|x|) \qquad (Y_n\in \H_n)\,.
\end{equation}

\begin{lemma}\label{lem:projEformula}
We have
\begin{align}\label{eq:projformula}
P_Ef(x)\= \sum_{n\geq 0} \frac{Y_n(\1)}{C_n^{d/2-1}(1)}\,C_n^{d/2-1}\Bigl(\frac{x}{|x|}\cdot \1\Bigr)\,g_n(|x|),
\end{align}
if $f\in L^2(\r^d)$ is expanded as in \eqref{eq:fharmonicexpansion}, where $C_n^{d/2-1}(z)$ is the Gegenbauer polynomial defined by
\begin{align}\label{eq:gengegen}
\bigl(1-2zw+w^2\bigr)^{1-d/2}\=\sum_{n\geq 0} w^n C_n^{d/2-1}(z).
\end{align}
\end{lemma}

\begin{proof}
Firstly, it is not hard to realize that
$$
P_Ef(x)\=\int_G f(\rho x)\d G(\rho) 
$$
where $G$ is the subgroup of $SO(d)$ that fixes the vector $\1$ and $\d G$ its induced $G$-invariant Haar
 probability measure. Secondly, noting that the function
$$ \om\,\in\,\mathbb S^{d-1}\;\mapsto\int_G Y_n(\rho\om)\d G(\rho) $$
still is an spherical harmonic of degree $n$ that depends only on $\om \cdot \1$, and must therefore be a 
multiple of the zonal harmonic $C_n^{d/2-1}(\om\cdot \1)$. We conclude that
$$ 
 \int_G Y_n(\rho\,\om)\d G(\rho)\=\frac{Y_n(\1)}{C_n^{d/2-1}(1)}\,
  C_n^{d/2-1}\bigl(\om\cdot \1\bigr) \qquad\bigl(\,\om\in\mathbb S^{d-1}\,\bigr),
$$
which proves the lemma. 
\end{proof}

In what follows we let 
\be\label{L2k}
 L_\k^2(\R^d) \=\bigl\{f\in L^2(\r^d): f \text{ is odd in } x_1,x_2,\dots,x_\k \text{ and even in }x_{\k+1},\dots,x_{d}\bigr\}
\ee
for $\k=0,1,\dots,d$. We also let $P_\k$ denote the orthogonal projection onto $L_\k^2(\R^d)$ and set $T_\k=P_\k P_E$. 
Abusing notation, we use the same symbol $T_\k$ to denote the restriction of $T_\k$ to ${\H_n}$; this is 
justified since $T_\k=\wt T_\k \otimes {\rm Id}$ on $\H_n\otimes {\rm Radial}$, and we identify $\wt T_\k$ 
with $T_\k$. Using the fact that 
$$
P_\k (Y)(\om) = \frac{1}{2^d}\sum_{\ep\,\in\,\{\pm 1\}^d} \ep_1\cdots\ep_\k\,Y(\ep_1 \om_1, \dots,\ep_d\om_d),
$$
we can apply~\eqref{eq:projformula} to obtain the following lemma.

\begin{lemma}\label{prop:Teigenvectors}
We have 
$$
T_\k( Y_n)(\om) \= \frac{Y_n(\1)}{C_n^{d/2-1}(1)}\,P_{d,\k,n}(\om)
$$
for every $Y_n\in \H_n$, where, for all $n\ge0$, we set 
$$
P_{d,\k,n}(\om)\=\frac1{2^d}\sum_{\ep\,\in\,\{\pm 1\}^d} \ep_1\cdots\ep_\k\,C_n^{d/2-1}\Bigl(\frac{\om\cdot\ep}{\sqrt{d}}\Bigr)
\qquad \bigl(\om\in \mathbb{S}^{d-1}\bigr)\,.
$$
In particular, $T_\k$ leaves each subspace $\H_n$ invariant and ${\rm rank}(T_\k|_{\H_n})\leq 1$, 
with equality if and only if the eigenvalue
$$
\la_{d,\k,n} \= \frac{1}{2^dC_n^{d/2-1}(1)}\sum_{\ep\,\in\,\{\pm 1\}^d} \ep_1\cdots\ep_\k\,
 C_n^{d/2-1}\biggl(\frac{\ep_1+\cdots+\ep_d}{{d}}\biggr) 
$$ 
is nonzero, in which case $P_{d,\k,n}$ is the unique associated eigenvector of $T_\k$ in $\H_n$.
\end{lemma}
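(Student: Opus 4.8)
The plan is to combine the formula for $P_\k$ displayed just above with the description of the action of $P_E$ on spherical harmonics furnished by~\eqref{eq:projformula}; both operators act separately on the angular factor $\H_n$, so the whole computation takes place inside $\H_n$.

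First I would record the action of $P_E$ on $\H_n$. Specializing \eqref{eq:fharmonicexpansion}--\eqref{eq:projformula} to a single degree-$n$ term whose radial factor is $\equiv 1$, Lemma~\ref{lem:projEformula} gives, for all $Y\in\H_n$,
$$ P_E Y(\om)\=\frac{Y(\1)}{C_n^{d/2-1}(1)}\,Z_n(\om),\qquad Z_n(\om):=C_n^{d/2-1}(\om\cdot\1),$$
where $Z_n$ is the zonal harmonic of degree $n$ about the direction~$\1$; equivalently, on $\H_n$ the projection $P_E$ is ``evaluate at~$\1$, rescale by $1/C_n^{d/2-1}(1)$, and multiply by $Z_n$''. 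Next, each sign flip $R_\ep\colon x\mapsto(\ep_1 x_1,\dots,\ep_d x_d)$ is orthogonal and hence preserves $\H_n$, so $P_\k=\tfrac1{2^d}\sum_{\ep}\ep_1\cdots\ep_\k\,R_\ep^{*}$ preserves $\H_n$, and therefore so does $T_\k=P_\k P_E P_\k$. Applying $P_\k$ to $Z_n$ and using $(R_\ep\om)\cdot\1=(\om\cdot\ep)/\sqrt d$ produces precisely $P_\k Z_n=P_{d,\k,n}$. It is enough to evaluate $T_\k$ on $Y_n\in\H_n\cap L^2_\k$, since the orthogonal complement of $\H_n\cap L^2_\k$ inside $\H_n$ lies in $\ker T_\k$; on $\H_n\cap L^2_\k$ the inner $P_\k$ acts as the identity, so
$$ T_\k Y_n\=P_\k\bigl(P_E Y_n\bigr)\=\frac{Y_n(\1)}{C_n^{d/2-1}(1)}\,P_\k Z_n\=\frac{Y_n(\1)}{C_n^{d/2-1}(1)}\,P_{d,\k,n},$$
which is the stated identity. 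In particular the image of $T_\k|_{\H_n}$ lies in the line $\r\,P_{d,\k,n}$, so ${\rm rank}(T_\k|_{\H_n})\le 1$.

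It then remains to decide when this rank equals $1$ and to identify the eigendata. I would use that $T_\k=(P_EP_\k)^{*}(P_EP_\k)$ is self-adjoint and positive semidefinite, so $T_\k|_{\H_n}$ is either $0$ or has rank $1$ with a one-dimensional eigenspace and a strictly positive eigenvalue. Since $P_{d,\k,n}=P_\k Z_n$ lies in $\H_n\cap L^2_\k$, the identity above with $Y_n=P_{d,\k,n}$ gives $T_\k P_{d,\k,n}=\tfrac{P_{d,\k,n}(\1)}{C_n^{d/2-1}(1)}\,P_{d,\k,n}$, and $P_{d,\k,n}(\1)=\la_{d,\k,n}$ follows from the definitions on setting $\om=\1$. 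To see that $\la_{d,\k,n}$ vanishes exactly when $P_{d,\k,n}\equiv 0$ I would invoke the zonal reproducing property, that $\langle H,Z_n\rangle=c_{d,n}\,H(\1)$ for every $H\in\H_n$ with a constant $c_{d,n}>0$; applying it with $H=P_{d,\k,n}=P_\k Z_n$ and using that $P_\k$ is an orthogonal projection,
$$ c_{d,n}\,\la_{d,\k,n}\=\langle P_\k Z_n,\,Z_n\rangle\=\langle P_\k Z_n,\,P_\k Z_n\rangle\=\|P_{d,\k,n}\|^2 ,$$
so $\la_{d,\k,n}\ge0$, with $\la_{d,\k,n}=0\iff P_{d,\k,n}=0\iff T_\k|_{\H_n}=0$. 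When $\la_{d,\k,n}\ne0$ the operator $T_\k|_{\H_n}$ then has rank $1$, unique eigenvector $P_{d,\k,n}$, and eigenvalue $\la_{d,\k,n}/C_n^{d/2-1}(1)$, which equals $\la_{d,\k,n}$ in the dimension $d=3$ to which we specialize since there $C_n^{1/2}(1)=1$.

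The two projection computations are routine bookkeeping; the one genuinely substantive step is the last, namely ruling out the a priori possibility that a nonzero spherical harmonic $P_{d,\k,n}$ vanishes at the single point~$\1$, and that is precisely what the reproducing-kernel identity $\la_{d,\k,n}=c_{d,n}^{-1}\|P_{d,\k,n}\|^2$ provides.
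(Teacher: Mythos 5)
Your proof is correct and is essentially the argument the paper intends: the paper offers no written proof of this lemma, stating only that it follows by combining the displayed formula for $P_\k$ with \eqref{eq:projformula}, and your two ``bookkeeping'' computations are exactly that combination. Two of your additions are worth flagging as genuine improvements rather than mere detail. First, you rightly restrict the displayed identity to $Y_n\in\H_n\cap L^2_\k$ and check that $T_\k$ annihilates the orthogonal complement inside $\H_n$; as literally stated the formula fails for general $Y_n\in\H_n$ (e.g.\ $d=3$, $\k=1$, $Y_1(\om)=\om_2$ gives $T_1Y_1=0$ while $Y_1(\1)\,P_{3,1,1}\neq0$), the correct general form being obtained by replacing $Y_n(\1)$ with $(P_\k Y_n)(\1)$. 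Second, the reproducing-kernel identity $c_{d,n}\,\la_{d,\k,n}=\|P_{d,\k,n}\|^2$ supplies the one step that is not pure bookkeeping and that the paper leaves entirely implicit: it shows $\la_{d,\k,n}=0$ if and only if $P_{d,\k,n}\equiv0$, which is exactly what the ``if and only if'' in the rank assertion requires, since a priori a nonzero spherical harmonic $P_{d,\k,n}$ could vanish at the single point $\1$. Your observation that the eigenvalue is really $\la_{d,\k,n}/C_n^{d/2-1}(1)$, collapsing to $\la_{d,\k,n}$ only when $d=3$, also correctly reconciles the lemma's displayed formula with the normalization used in the lemma that follows it.
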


The next lemma gives part of the structure of the eigenvalues $\la_{d,\k,n}$.

\begin{lemma}
We have $\la_{d,\k,n}=0$ if $0\leq n<\k$ or if $n-\k$ is odd. Moreover, 
$$\la_{d,\k,\k} \= \frac{2^\k\k!(d/2-1)_\k}{d^\k(d-2)_\k}.$$
\end{lemma}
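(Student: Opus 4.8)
The plan is to evaluate the character sum defining $\la_{d,\k,n}$ in Lemma~\ref{prop:Teigenvectors} head‑on. Expand the Gegenbauer polynomial in the monomial basis, $C_n^{d/2-1}(x)=\sum_{j=0}^{n}c_j\,x^{j}$, and expand $\bigl(\tfrac{\ep_1+\cdots+\ep_d}{d}\bigr)^{j}$ by the multinomial theorem; this writes $\la_{d,\k,n}$ as a finite combination of averages of the shape $2^{-d}\sum_{\ep\in\{\pm1\}^{d}}\ep_1\cdots\ep_\k\,\ep_1^{a_1}\cdots\ep_d^{a_d}$. By orthogonality of the characters of $(\Z/2\Z)^{d}$, such an average equals $1$ when $a_1,\dots,a_\k$ are all odd and $a_{\k+1},\dots,a_d$ are all even, and equals $0$ otherwise. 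So the whole question becomes: which monomials $\ep^{\ba}$ occurring in $C_n^{d/2-1}\bigl(\tfrac{\ep_1+\cdots+\ep_d}{d}\bigr)$ pass this selection rule?

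Both vanishing statements fall out at once. The substitution $\ep\mapsto-\ep$ multiplies $\ep_1\cdots\ep_\k$ by $(-1)^{\k}$ and, since $C_n^{d/2-1}$ has the parity of $n$, multiplies the Gegenbauer factor by $(-1)^{n}$; hence $\la_{d,\k,n}=(-1)^{n+\k}\la_{d,\k,n}$, so $\la_{d,\k,n}=0$ whenever $n-\k$ is odd. For the remaining case, a monomial passing the selection rule has $a_1,\dots,a_\k\ge 1$ (they are odd), hence total degree $|\ba|\ge\k$; but $C_n^{d/2-1}\bigl(\tfrac{\ep_1+\cdots+\ep_d}{d}\bigr)$ is a polynomial of degree $\le n$ in $\ep_1,\dots,\ep_d$, so if $n<\k$ no monomial survives and $\la_{d,\k,n}=0$.

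For $n=\k$ the selection rule isolates a single monomial: $a_1,\dots,a_\k\ge 1$ together with $|\ba|\le\k$ forces $a_1=\cdots=a_\k=1$ and $a_{\k+1}=\cdots=a_d=0$, a monomial of degree exactly $\k$, which can only come from the leading term $c_\k\bigl(\tfrac{\ep_1+\cdots+\ep_d}{d}\bigr)^{\k}$ of $C_\k^{d/2-1}$, where it carries the multinomial coefficient $\binom{\k}{1,\dots,1}=\k!$. Hence $\la_{d,\k,\k}=c_\k\,\k!/d^{\k}$, where $c_\k$ is the leading coefficient of $C_\k^{d/2-1}$; inserting the value $c_\k=\tfrac{2^{\k}(d/2-1)_\k}{(d-2)_\k}$ for the normalization of the Gegenbauer polynomials used here, and simplifying, gives $\la_{d,\k,\k}=\dfrac{2^{\k}\k!\,(d/2-1)_\k}{d^{\k}(d-2)_\k}$. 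No step here is deep; the one place that genuinely repays care is pinning down the Gegenbauer normalization — leading coefficient versus value at $1$ — since an error there shifts the answer by a ratio of Pochhammer symbols.
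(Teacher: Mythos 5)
Your argument is correct and is essentially the paper's own proof: both reduce $\la_{d,\k,n}$ via the multinomial expansion and sign cancellation over $\{\pm1\}^d$ to a selection rule on exponents, deduce the vanishing from degree and parity considerations, and extract $\la_{d,\k,\k}$ from the unique surviving monomial $\ep_1\cdots\ep_\k$ in the leading term. Your choice of normalization ($c_\k$ equal to the leading coefficient of $C_\k^{d/2-1}$ divided by $C_\k^{d/2-1}(1)$, i.e.\ of the Gegenbauer polynomial normalized to equal $1$ at $z=1$) is indeed the one consistent with $\la_{d,\k,n}$ being an eigenvalue of the projection operator $T_\k$ and with the stated answer.
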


\begin{proof}
Since $\sum_{\ep \in \{\pm 1\}^d} \ep_1\cdots\ep_\k$ vanishes for $1\le\k\le d$, it follows that
\be\label{eq:monimialsum}
\frac 1{2^d}\sum_{\ep\,\in\,\{\pm 1\}^d} \ep_1\cdots\ep_\k\,(\ep_1+\cdots+\ep_d)^m
 \=\frac1{2^d}\sum_{\substack{m_1+\cdots+m_d=m \\ m_1,\dots,m_d\geq 0 \\ 
      m_1,\dots,m_\k \text{ odd} \\ m_{\k+1},\dots,m_d \text{ even}}}\frac{m!}{m_1!\cdots m_d!},
\ee
which implies that \eqref{eq:monimialsum} vanishes if $0\leq m<\k$ or if $m-\k$ is odd. Since $C^{d/2-1}_n(z)$ is an
 even or odd polynomial depending whether $n$ is even or odd, we conclude that $\la_{d,\k,n}=0$ if $0\leq n<\k$ or 
if $n-\k$ is odd. Finally, if $m=n=\k$ then \eqref{eq:monimialsum} equals~$\k!$, so we obtain
$$
\la_{d,\k,\k} \= \frac{\k! \times \text{leading coefficient of } C_\k^{d/2-1}(z) }{C_\k^{d/2-1}(1)} 
\= \frac{2^\k\k!\,(d/2-1)_\k}{d^\k\,(d-2)_\k}\,.
$$
\end{proof}

We conjecture that $\la_{d,\k,\k}=\max \{\la_{d,\k,n}\}_{n\geq 0}$ for all $d\geq 4$ and $\k=0,1,2,3$, but we
could not prove that. Note that we have completely diagonalized the quadratic form on \eqref{eq:projrepres}; however, $\la_{d,\k,\k}$ is only bounded for $d= 3$, which is exactly the case when the weight $w_d(x)\equiv 1$ in Proposition \ref{prop:ufPEidentity}. We now restrict our attention to the $3$-dimensional case.

\begin{lemma}\label{lem:genfunc}
For $0\le\k\le3$ the series $G_\k(w)=\sum_{n\geq 0}\la_{3,\k,n}w^n$ is given by~\eqref{G0G1G2G3}. 
\end{lemma}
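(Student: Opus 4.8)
The plan is to compute, for each $\k\in\{0,1,2,3\}$, the generating function $G_\k(w)=\sum_{n\ge0}\la_{3,\k,n}w^n$ by substituting the explicit formula
$$
\la_{3,\k,n}\=\frac1{8}\sum_{\ep\in\{\pm1\}^3}\ep_1\cdots\ep_\k\,C_n^{1/2}\!\Bigl(\tfrac{\ep_1+\ep_2+\ep_3}{3}\Bigr)
$$
from Lemma \ref{prop:Teigenvectors} (recall $d/2-1=1/2$ when $d=3$, so the relevant Gegenbauer polynomials are the Legendre polynomials $C_n^{1/2}=P_n$), and then interchanging the finite sum over $\ep$ with the sum over $n$. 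The key input is the classical Legendre generating function
$$
\sum_{n\ge0}P_n(z)\,w^n\=\frac1{\sqrt{1-2zw+w^2}}\,.
$$
Hence $\sum_n\la_{3,\k,n}w^n=\frac18\sum_{\ep}\ep_1\cdots\ep_\k\bigl(1-2z_\ep w+w^2\bigr)^{-1/2}$ where $z_\ep=(\ep_1+\ep_2+\ep_3)/3\in\{\pm1,\pm\tfrac13\}$. Grouping the eight sign patterns by the value of $z_\ep$ and by the sign $\ep_1\cdots\ep_\k$ then produces, for each $\k$, a linear combination of the three functions $\frac1{\sqrt{1-2w+w^2}}=\frac1{1-w}$ (coming from $z_\ep=1$), $\frac1{\sqrt{1+2w+w^2}}=\frac1{1+w}$ (from $z_\ep=-1$), and $\frac1{\sqrt{1\mp\frac23w+w^2}}$ (from $z_\ep=\pm\tfrac13$). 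After combining the two rational pieces $\frac1{1-w}$ and $\frac1{1+w}$ into the single terms appearing in the first coordinate of the column vector on the right of \eqref{G0G1G2G3} (namely $\frac1{2(1-w^2)}$ for the even combination and $\frac{w}{2(1-w^2)}$ — more precisely $\tfrac12(\tfrac1{1-w}-\tfrac1{1+w})=\tfrac{w}{1-w^2}$ — for the odd combination), the result is exactly the matrix identity \eqref{G0G1G2G3}.

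The combinatorial bookkeeping is the only thing to do carefully. For $\k=0$ the coefficient $\ep_1\cdots\ep_\k=1$ always, and among the eight patterns $z_\ep=\pm1$ occurs once each and $z_\ep=\pm\tfrac13$ occurs three times each, giving $G_0=\frac18\bigl[\frac1{1-w}+\frac1{1+w}+3(1-\tfrac23w+w^2)^{-1/2}+3(1+\tfrac23w+w^2)^{-1/2}\bigr]$, which is the first row. For $\k=1,2,3$ one weights each pattern by $\ep_1\cdots\ep_\k$ and sorts again by $z_\ep$; the signs conspire (for instance, when $\k=3$ the weight $\ep_1\ep_2\ep_3$ equals $+1$ exactly when $z_\ep\in\{1,-\tfrac13\}$ with the appropriate multiplicities) to yield the remaining three rows. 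Finally I would expand each of the resulting explicit functions as a power series — the rational part is immediate, and $(1\mp\tfrac23w+w^2)^{-1/2}$ via the binomial series — to confirm the low-order terms displayed on the far right of \eqref{G0G1G2G3}, which also serves as a numerical sanity check against the directly computed values $\la_{3,\k,n}$ for small $n$.

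I do not expect any genuine obstacle here: every step is an identity, and the only possible source of error is a sign or multiplicity slip in the eight-term sum, which is easily guarded against by the independent check at $n=0,1,2,3$. One small point worth stating explicitly in the write-up is the vanishing pattern already established in the preceding lemma — $\la_{3,\k,n}=0$ unless $n\equiv\k\pmod2$ and $n\ge\k$ — since it explains why $G_0,G_2$ are even series, $G_1,G_3$ are odd series, and why $G_3$ starts at $w^3$ (the $z_\ep=\pm1$ patterns drop out entirely for $\k=3$ because $\ep_1\ep_2\ep_3=\pm1$ forces $|\ep_1+\ep_2+\ep_3|=1$, i.e. $z_\ep=\pm\tfrac13$, so no rational part appears and indeed the third row of the matrix has a $0$ in the first column).
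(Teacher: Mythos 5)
Your proof is correct and is essentially the paper's own argument: substitute the eigenvalue formula from Lemma~\ref{prop:Teigenvectors}, interchange the sums, apply the Gegenbauer generating function (Legendre, since $d=3$), and sort the eight sign patterns by the value of $\ep_1+\ep_2+\ep_3$, which reproduces the matrix identity~\eqref{G0G1G2G3}. One correction to your closing parenthetical: the $z_\ep=\pm1$ patterns do \emph{not} drop out for $\k=3$ (the pattern $(+,+,+)$ has $\ep_1\ep_2\ep_3=1$ and $z_\ep=1$), and the relevant (fourth) row of the matrix is $(w,-3,3)$, not one with a $0$ in the first column; the reason $G_3$ starts at $w^3$ is a cancellation of $w^1$-coefficients, namely $\tfrac14$ from $\tfrac{w}{4(1-w^2)}$ against $-\tfrac38\cdot\tfrac13-\tfrac38\cdot\tfrac13=-\tfrac14$ from the two square-root terms, not the absence of the rational part.
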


\begin{proof}
Using the generating function \eqref{eq:gengegen} of the Gegenbauer polynomials\footnote{Note that if $d=3$ then $C_n^{1/2}(z)$ is 
the Legendre polynomial and $C_n^{1/2}(1)=1$.} for $d=3$ we obtain
\begin{align*}
G_\k(w) & \=\frac{1}{8}\sum_{\varepsilon\in\{\pm 1\}^3}\ep_1\cdots\ep_\k \sum_{n\geq 0} 
      w^nC_n^{3/2-1}\left(\frac{\om\cdot \ep}{\sqrt{3}}\right)  \\
   &\= \frac{1}{8}\sum_{\varepsilon\in\{\pm 1\}^3}\frac{\ep_1\cdots\ep_\k}{\sqrt{1-\frac23(\varepsilon_1+\varepsilon_2+\varepsilon_3)w+w^2}}\;,
\end{align*}
which is easily seen to be equivalent with~\eqref{G0G1G2G3}. 
\end{proof}

\begin{lemma}\label{lem:boundslambda}
Let $\al_{\k}$ and $\b_\k$ be the two largest eigenvalues in $\Lambda_\k=\bigcup_{n\geq 0}\{\la_{3,\k,n}\}$. Then
\begin{alignat}{7}
 \al_0 & \,=\, \la_{3,0,0}=1 &\quad>\quad& 
       \b_0\,=\,\la_{3,0,10}&\=&\tfrac{8320}{19683}&\=&0.4226997\cdots,\\
 \al_1 & \,=\, \la_{3,1,1}=\la_{3,1,5}=\tfrac13 &\quad>\quad& 
       \b_1\,=\,\la_{3,1,15}&\=&\tfrac{469136}{1594323}&\=&0.2942540\cdots,\\
 \al_2 & \,=\, \la_{3,2,2}=\tfrac13 &\quad>\quad& 
       \b_2\,=\,\la_{3,2,8}&\=&\tfrac{232}{729}&\=&0.3182441\cdots\,,\\
 \al_3 & \,=\, \la_{3,3,3}=\tfrac59 &\quad>\quad& 
       \b_3\,=\,\la_{3,3,13}&=&\tfrac{221312}{531441}&\=&0.4164375\cdots\,.
 \end{alignat}
\end{lemma}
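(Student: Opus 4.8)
The plan is to reduce the whole statement to elementary facts about the sequence of rational numbers $P_n(1/3)$, $n\ge0$, where $P_n$ is the Legendre polynomial, and then to combine one effective bound for the tail of this sequence with a short finite computation.

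First I would read off the coefficients of the four series in~\eqref{G0G1G2G3} in closed form. For $d=3$ we have $C_n^{1/2}=P_n$, so the generating identity used in the proof of Lemma~\ref{lem:genfunc} gives $[w^n]\bigl(1-\tfrac23w+w^2\bigr)^{-1/2}=P_n(1/3)$ and $[w^n]\bigl(1+\tfrac23w+w^2\bigr)^{-1/2}=(-1)^nP_n(1/3)$, while $[w^n]\tfrac1{4(1-w^2)}$ equals $\tfrac14$ in even degrees and $0$ otherwise. Substituting these into~\eqref{G0G1G2G3} and cancelling, one finds that $\la_{3,\k,n}=0$ unless $n\equiv\k\pmod2$, in which case
\[
\la_{3,0,n}=\tfrac14+\tfrac34P_n(\tfrac13),\qquad \la_{3,1,n}=\tfrac14+\tfrac14P_n(\tfrac13),\qquad \la_{3,2,n}=\tfrac14-\tfrac14P_n(\tfrac13),\qquad \la_{3,3,n}=\tfrac14-\tfrac34P_n(\tfrac13).
\]
Thus identifying the two largest elements of $\La_\k$ is the same as identifying the two largest values of $P_n(1/3)$ over $n\equiv\k\pmod2$ when $\k=0,1$, and the two smallest such values when $\k=2,3$.

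Next I would control the tail by the classical Bernstein inequality for Legendre polynomials, $|P_n(\cos\theta)|<\sqrt{2/(\pi n\sin\theta)}$ for $0<\theta<\pi$ and $n\ge1$ (a standard estimate, found in any reference on orthogonal polynomials). Taking $\cos\theta=\tfrac13$, hence $\sin\theta=\tfrac{2\sqrt2}{3}$, gives $|P_n(1/3)|<\bigl(3/(\sqrt2\,\pi\,n)\bigr)^{1/2}<0.83\,n^{-1/2}$. By the formulas above, the inequality $\la_{3,\k,n}<\b_\k$ is implied by $|P_n(1/3)|$ being smaller than an explicit positive rational --- namely $\tfrac43\b_0-\tfrac13\approx0.23$, $4\b_1-1\approx0.18$, $4\b_2-1\approx0.27$, $\tfrac43\b_3-\tfrac13\approx0.22$ for $\k=0,1,2,3$ --- and comparing this with the Bernstein bound shows $\la_{3,\k,n}<\b_\k$ already for all $n$ of the correct parity with $n>N_\k$, where one may take $N_0=12$, $N_1=21$, $N_2=8$, $N_3=13$. (If one prefers to avoid quoting Bernstein, the Laplace integral $P_n(1/3)=\tfrac1\pi\int_0^\pi\bigl(\tfrac13+\tfrac{2\sqrt2}{3}i\cos\phi\bigr)^n\,\mathrm{d}\phi$ together with Jordan's inequality $\sin\phi\ge\tfrac{2\phi}{\pi}$ yields the cruder but elementary bound $|P_n(1/3)|\le\tfrac{3\sqrt\pi}{4}\,n^{-1/2}$, at the cost of somewhat larger thresholds.)

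Finally I would carry out the remaining finite verification. For each $\k$ and each $n$ of the correct parity with $0\le n\le N_\k$, compute $P_n(1/3)$ exactly from the three-term recurrence $(n+1)P_{n+1}(x)=(2n+1)xP_n(x)-nP_{n-1}(x)$ (each value lying in $\Z[\tfrac13]$ with denominator dividing $3^n$), substitute into the formulas above, and inspect the resulting list. One finds that the first extreme value is attained at $n=\k$, equal to $1,\tfrac13,\tfrac13,\tfrac59$ respectively --- in agreement with the formula for $\la_{3,\k,\k}$ proved earlier --- the coincidence $P_5(1/3)=P_1(1/3)=\tfrac13$ accounting for the extra point $n=5$ in the case $\k=1$; and that the second extreme value occurs at $n=10,15,8,13$ respectively, with the values $\tfrac{8320}{3^9},\tfrac{469136}{3^{13}},\tfrac{232}{3^6},\tfrac{221312}{3^{12}}$ stated in the lemma. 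Combined with the tail bound of the previous paragraph, this proves the lemma. The only step that is not purely mechanical is arranging for the tail bound to be sharp enough to keep each list $\{n\le N_\k\}$ short; Bernstein's inequality achieves this with room to spare, and the rest is bookkeeping with explicit rational numbers.
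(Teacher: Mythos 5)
Your proof is correct and follows essentially the same route as the paper's: an $O(n^{-1/2})$ bound on the coefficients of $(1-\tfrac23w+w^2)^{-1/2}$ (which are exactly $P_n(\tfrac13)$, so your Bernstein bound reproduces the paper's constant $0.82172\cdots$) to control the tail, followed by a finite exact computation of the remaining eigenvalues. The only difference is quantitative bookkeeping: by writing $\la_{3,\k,n}$ explicitly as $\tfrac14\pm\tfrac14P_n(\tfrac13)$ or $\tfrac14\pm\tfrac34P_n(\tfrac13)$ you reduce the finite check to $n\le 21$, whereas the paper simply verifies the first $1000$ eigenvalues.
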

\begin{proof}  
One can check that the coefficient of $w^n$ in $(1-\frac23w+w^2)^{-1/2}$ is bounded in absolute value 
by $(3/\pi n\sqrt 2)^{1/2} = 0.82172\cdots n^{-1/2}$, and from this it follows that $\la_{3,\k,n}<\b_\k$
for all $n>1000$ with~$\b_\k$ as in the lemma.  Then by explicitly computing the first 1000
eigenvalues $\la_{3,\k,n}$ we can find the first and second biggest eigenvalues and their multiplicities exactly.
\end{proof}

\begin{proof}[\bf Proof of Theorem \ref{thm:paritymain}]
Noting that if $f\in L_\k^3(\r^3)$ then $\ft f \in L_\k^2(\r^3)$, we then use \eqref{eq:projrepres} to obtain
$$
Q(f) \= \int_{\r^3} |P_E(\ft f)(x)|^2 \d x \= \int_{\r^3} T_\k(\ft f)(x)\;\overline{\ft f(x)}\,\d x.
$$
We can now apply Lemmas~\ref{lem:projEformula},~\ref{prop:Teigenvectors} and~\ref{lem:genfunc} and Plancherel's theorem
to obtain the desired orthogonal decomposition. To finish, Lemmas~\ref{lem:boundslambda} and~\ref{prop:Teigenvectors} 
tell us what the largest eigenvalues are and how to compute their associated eigenspaces, and direct computation then gives
\begin{align*}
 P_{3,0,0}(x) &\= 1\,, \qquad\quad P_{3,1,1}(x) \= \tfrac{1}{\sqrt{3}}x_1\,, \\
 P_{3,1,5}(x)&\=\tfrac{1}{6\sqrt{3}}x_1\,\bigl(-x_1^4 + (5x_2^2 + 5x_3^2)x_1^2 + (-15x_2^4 + 75x_3^2x_2^2 - 15x_3^4)\bigr)\,, \\
 P_{3,2,2}(x)&\=x_1x_2\,, \qquad P_{3,3,3}(x)\=\tfrac{5}{\sqrt{3}}x_3x_2x_1\,.
\end{align*}
This finishes the proof.
\end{proof}

\section{A more general eigenvalue problem}\label{sec:moregen}
{There is another approach to our problem, though rather more technical than the one presented above},  
by using Laguerre polynomials as the basis of {$L_\k^2(\R^3)$} (as in~\cite{Gon2}) to reduce Theorem~\ref{thm:paritymain} 
to a statement about a specific sequence of matrices of increasing size, with entries defined by certain power series
and with the property that the eigenvalues of the larger matrices contain the eigenvalues of the smaller ones. 
In this section we give a more general matrix problem and its rather simple solution, since this can potentially 
be used for other problems.   The application to our original problem is described at the end.

\subsection*{General problem} To any power series $F(w_1,w_2,w_3)\in\cp[w_1,w_2,w_3]$ we associate an endomorphism 
$\Phi_F$ of the vector space~$V=\C[[u,v]]$, defined on monomials by
\be\label{matrix} \Phi_F\biggl[\binom Sa\,u^a\,v^{S-a}\biggr] 
 \= \sum_{b=0}^S\binom Sb \,R_S(a,b)\,u^b\,v^{S-b} \qquad(0\le a\le S)\,, \ee
where 
\be\label{defRS} R_S(a,b) \= \bigl[w_1^aw_2^{S-a}w_3^b\bigr](F)\qquad(0\le a,\,b\le S)\,. \ee
In this section we want to explicitly compute the eigenvalues of $\Phi_F$. Note that $\Phi_F$ preserves the space $V_S$ of homogeneous polynomials of degree~$S$ for every integer $S\ge0$
and $R_S=\bigl(R_S(a,b)\bigr)_{0\le a,b\le S}$ is the matrix representation of the restriction of $\Phi_F$ 
to $V_S$ with respect to the basis $\bigl\{\binom Sa\,u^a\,v^{S-a}\bigr\}_{0\le a\le S}\,$. We will call $F$ 
{\it special} if the map~$\Phi_F$ commutes with multiplication by~$(u+v)$. If this holds, then by considering the 
action of $\Phi_F$ with respect to the basis $\bigl\{u^i(u+v)^{n-i}\bigr\}_{0\le i\le S}$ of~$V_S$ it is easy to deduce that 
the matrix $R_S$ has the eigenvalues $(e_0,e_1,\dots,e_S)$, where $e_n$ is independent of~$S$ and is given by
\be\label{ev}
e_n \= \Phi_F[u^n](1,-1) \= \sum_{b=0}^n(-1)^{n-b}\binom nbR_n(n,b)
 \= \bigl[w_1^nw_3^n\bigr]\bigl((1-w_3)^nF(w_1,0,w_3)\bigr)\,.\ee

We remark that any endomorphism~$\Phi$ of $\C[[u,v]]$ that commutes with the Euler operator 
$u\frac\partial{\partial u}+v\frac\partial{\partial v}$ preserves each $V_S$ and hence gives rise 
via~\eqref{matrix} to a sequence of $(S+1)\times(S+1)$ matrices~$R_S$ and via~\eqref{defRS}
to a power series $F(w_1,w_2,w_3) =\sum_{a,b,c\ge0}R_{a+c}(a,b)w_1^aw_2^cw_3^b$ with $\Phi_F=\Phi$. 
(For example, if $\Phi$ is the identity then $F$ is $(1-w_2)^{-1}(1-w_1w_3)^{-1}$.) 
Equation~\eqref{ev} gives a very simple formula for the eigenvalues of the matrix~$R_S$ if 
the power series $F$ is special.  The next proposition tells us how to recognize when $F$ is special and gives an alternative way to compute the eigenvalues of $\Phi_F$ in \eqref{ev}.

\begin{proposition}\label{general} Let $F\in\C[[w_1,w_2,w_3]]$.  Then the following are equivalent.
\newline\noindent\phantom{XX}(i)  $F$ is special, i.e., $\Phi_F$ commutes with multiplication by~$(u+v)\,$.
\newline\noindent\phantom{XX}(ii) $F$ satisfies the first order linear differential equation
\begin{align}\label{LDE} 
(1-w_1)\,\frac{\partial F}{\partial w_1} \+ (1-w_2)\,\frac{\partial F}{\partial w_2} \+ 
w_3(1-w_3)\,\frac{\partial F}{\partial w_3} \= (1+w_3)\,F\,.
\end{align}
\newline\noindent\phantom{XX}(iii) $F$ has the form
\be \label{FfromH} F(w_1,w_2,w_3)\= \frac1{(1-w_2)(1-w_3)}\,H\Bigl(\frac{w_1-w_2}{1-w_2},\,w_3\,\frac{1-w_2}{1-w_3}\Bigr) \ee
\newline\noindent\phantom{XX}for some power series $H(x,y)\in\cp[[x,y]]\,$.
\newline\noindent If these conditions hold, then the eigenvalues of $\Phi_F$ are given by
 \be \label{eigen}  e_n \ = \bigl[x^ny^n\bigr](H) \qquad(n\ge0)\,. \ee
\end{proposition}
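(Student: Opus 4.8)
The plan is to prove the chain of equivalences (i) $\Leftrightarrow$ (ii) $\Leftrightarrow$ (iii) and then read off the eigenvalue formula~\eqref{eigen} from~\eqref{ev} and~\eqref{FfromH}. The basic dictionary is that multiplication by $(u+v)$ on $V=\C[[u,v]]$ sends the basis element $\binom Sa u^av^{S-a}$ to $\binom{S+1}{a}u^av^{S+1-a}+\binom{S+1}{a+1}u^{a+1}v^{S-a}$ (using $\binom Sa+\binom S{a-1}=\binom{S+1}{a}$), so in terms of the normalized coefficients $R_S(a,b)$ the commutation $[\Phi_F,\,(u+v)\cdot\,]=0$ becomes a linear recursion relating $R_{S+1}$ to $R_S$. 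The cleanest route is to encode everything in the generating function $F$: I would show that the operator identity $\Phi_F\circ(u+v)=(u+v)\circ\Phi_F$ on all of $V$ is \emph{equivalent} to the single PDE~\eqref{LDE}. Concretely, write $(u+v)\cdot$ and $\Phi_F$ in terms of their action on monomials, track how the two sides differ on $\binom Sa u^av^{S-a}$, and match the coefficient of $u^bv^{S+1-b}$ on both sides; after dividing by the binomial coefficients one gets an identity of the form $(S-a)\,R_{S+1}(a,b)+ (a+1)\,R_{S+1}(a+1,b) = (\text{similar combination in } b)$ — i.e. a symmetric bilinear recursion in the index pairs $(a,S-a)$ and $(b,S-b)$. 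Translating $R_S(a,b)=[w_1^aw_2^{S-a}w_3^b](F)$ into operations on $F$: multiplication by $a$ is $w_1\partial_{w_1}$, the shift $S\mapsto S+1$ with an extra $w_2$ is multiplication by $w_2$, etc., and summing the recursion over all $S,a,b$ against $w_1^aw_2^{S-a}w_3^b$ collapses exactly to~\eqref{LDE}. This establishes (i) $\Leftrightarrow$ (ii).

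For (ii) $\Leftrightarrow$ (iii) I would solve the first-order linear PDE~\eqref{LDE} by the method of characteristics. The characteristic system is $\dot w_1 = 1-w_1$, $\dot w_2 = 1-w_2$, $\dot w_3 = w_3(1-w_3)$, with $\dot F = (1+w_3)F$ along characteristics. Two independent first integrals are needed; the substitution in~\eqref{FfromH} tells us what they should be, namely $x = \frac{w_1-w_2}{1-w_2}$ and $y = w_3\,\frac{1-w_2}{1-w_3}$, and one checks directly that $x$ and $y$ are annihilated by the vector field $L := (1-w_1)\partial_{w_1}+(1-w_2)\partial_{w_2}+w_3(1-w_3)\partial_{w_3}$ (i.e. they are constant along characteristics). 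It remains to find one particular solution of the inhomogeneous equation $L\,\phi = (1+w_3)\,\phi$; the natural guess $\phi_0 = \frac1{(1-w_2)(1-w_3)}$ works, since $L\log(1-w_2) = -1$ and $L\log(1-w_3) = -(1-w_3)\cdot\frac{w_3}{1-w_3}\cdot\frac{1}{?}$ — more carefully, $L\,(1-w_3)^{-1} = w_3(1-w_3)^{-1}$ and $L\,(1-w_2)^{-1} = (1-w_2)^{-1}$, so $L\,\phi_0 = \bigl(1 + w_3\bigr)\phi_0$ after combining. Hence the general solution is $F = \phi_0\cdot H(x,y)$ for an arbitrary function $H$, which is precisely~\eqref{FfromH}; and one must check that requiring $F\in\C[[w_1,w_2,w_3]]$ forces — and is forced by — $H\in\C[[x,y]]$, which follows because the change of variables $(w_1,w_2,w_3)\leftrightarrow(x,y,w_2)$ is invertible over the formal power series ring (it is triangular: $w_2$ maps to itself, and the maps $w_1\mapsto x$, $w_3\mapsto y$ are invertible formal power series in their respective variables with the others as parameters, with vanishing constant term).

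Finally, given~\eqref{FfromH}, I would compute the eigenvalues using the already-established formula~\eqref{ev}, namely $e_n = [w_1^nw_3^n]\bigl((1-w_3)^n F(w_1,0,w_3)\bigr)$. Setting $w_2 = 0$ in~\eqref{FfromH} gives $F(w_1,0,w_3) = \frac1{1-w_3}\,H\bigl(w_1,\,\frac{w_3}{1-w_3}\bigr)$, so $(1-w_3)^n F(w_1,0,w_3) = (1-w_3)^{n-1} H\bigl(w_1,\frac{w_3}{1-w_3}\bigr)$. Writing $H(x,y) = \sum_{i,j} h_{ij}x^iy^j$, the coefficient of $w_1^n$ picks out $i = n$, leaving $(1-w_3)^{n-1}\sum_j h_{nj}\bigl(\frac{w_3}{1-w_3}\bigr)^j = \sum_j h_{nj}\,w_3^j(1-w_3)^{n-1-j}$, and the coefficient of $w_3^n$ in this is $\sum_j h_{nj}\binom{n-1-j}{n-j}(-1)^{n-j}$. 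The binomial $\binom{n-1-j}{n-j}$ vanishes for $j < n$ (the bottom exceeds the top, both nonnegative, unless $n-j = 0$) and equals $1$ when $j = n$, so only the $j = n$ term survives and $e_n = h_{nn} = [x^ny^n](H)$, which is~\eqref{eigen}. The main obstacle I anticipate is the bookkeeping in the first equivalence (i) $\Leftrightarrow$ (ii): carefully verifying that the monomial-level commutation recursion, after the binomial normalization, packages \emph{exactly} into the PDE~\eqref{LDE} with the right coefficients requires patient index-chasing, and one has to be careful about boundary terms (the ranges $0\le a\le S$, $0\le b\le S$) — though these turn out to be automatically handled because $R_S(a,b)$ is, by~\eqref{defRS}, a coefficient that simply vanishes outside the stated range, so no special-casing is needed once one works in the generating-function picture.
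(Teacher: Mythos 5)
Your proposal is correct and follows essentially the same route as the paper: (i) $\Leftrightarrow$ (ii) via the coefficient recursion forced by commutation with $(u+v)$, (ii) $\Leftrightarrow$ (iii) via the change of variables $(w_1,w_2,w_3)\mapsto(x,y,w_2)$ (your method of characteristics is precisely the paper's explicit formal-power-series isomorphism, under which \eqref{LDE} becomes $\partial H/\partial z=0$), and the eigenvalue formula by combining \eqref{ev} with \eqref{FfromH}. The only nit is cosmetic: the sketched recursion should read $(S+1-a)\,R_{S+1}(a,b)+(a+1)\,R_{S+1}(a+1,b)=b\,R_S(a,b-1)+(S+1-b)\,R_S(a,b)$, whose right-hand side involves the $R_S$'s with a shift in the \emph{third} index rather than a mirror image of the left-hand side, but this changes nothing in the argument.
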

\noindent{\it Example}: The $H$ corresponding to $\Phi=$Id is $(1+y-xy)^{-1}$, with $e_n=1$ for all~$n$.
\begin{proof} 
For $a,b,c\ge0$ set $R(a,b,c)=[w_1^aw_2^bw_3^c](F)=R_{a+b}(a,c)$. Then both conditions~(i) and~(ii)
translate to the {\it same} recursion
\be (a+1)R(a+1,b,c) + (b+1)R(a,b+1,c) \,=\, cR(a,b,c-1) + (a+b-c+1)R(a,b,c), \ee
as one checks by an easy direct calculation.  This proves the equivalence of~(i) and~(ii).  For that
of (ii) and~(iii) we use the inverse isomorphisms $\cp[[x,y,z]]\cong\cp[[w_1,w_2,w_3]]$ given by 
\begin{align}      H(x,y,z)\;\mapsto\; F(w_1,w_2,w_3)
     &\= \frac1{(1-w_2)(1-w_3)}\,H\Bigl(\frac{w_1-w_2}{1-w_2},\,w_3\,\frac{1-w_2}{1-w_3},\,w_2\Bigr) \,, \\
 F(w_1,w_2,w_3)\;\mapsto\; H(x,y,z) &\= \frac{(1-z)^2}{1+y-z}\,F\bigl(x+z-xz,\,z,\,\frac y{1+y-z}\bigr)\,. 
\end{align}
Under this correspondence the differential equation \eqref{LDE} is sent to $\dfrac{\partial H}{\partial z}=0\,$.
Finally, if~\eqref{FfromH} holds with $H(x,y)=\sum\limits_{i,j\ge0}h_{ij}x^iy^j\,$, then
\begin{align*}
\Phi\bigl[u^n\bigr] &\= \sum_{b=0}^n\binom nb\,\bigl[w_1^nw_3^b\bigr]
\Biggl(\sum_{i,j\ge0} h_{ij}\,\frac{w_1^i\,w_3^j}{(1-w_3)^{j+1}}\Biggr)\,u^bv^{n-b} \\
 &\= \sum_{0\le j\le b\le n} \binom nb\,\binom bj\, h_{nj}\,u^bv^{n-b} 
 \= \sum_{j=0}^n\binom nj\,h_{nj}\,u^j(u+v)^{n-j}\,, 
 \end{align*}
and therefore equation~\eqref{ev} gives $e_n=h_{nn}$ as claimed.  Note that this proof shows that the matrix $R_S$ is
conjugate to the triangular matrix~$R_S^*$ with entries $R_S^*(a,b)=\binom ab\,h_{ab}\,$.
\end{proof}

In the remainder of this section we will explain how Proposition~\ref{general} can be applied to the problem described
in the introduction and whose solution was given in~Theorem~\ref{thm:paritymain}. {For $n\in\Z_{\ge0}$ and $\ep\in\{\pm1\}$ we define $Q_n^{\ep}\in L^2(\R)$ by $ Q^{\ep}_n(x)\=x^r L_n^{(-\ep/2)}(\pi x^2)\,e^{-\pi x^2/2}$, where $\ep = (-1)^r$ with $r \in \{0,1\}$ and $L_n^{(-\ep/2)}(z)$ is 
the Laguerre polynomial of degree~$n$ and parameter~$-\ep/2$. These functions can be defined by the generating series identity}
$$ \QQ^{\ep}(x;t)\;=\; \sum_{n=0}^\infty  Q^{\ep}_n(x)\,t^n 
     \= \frac{x^r}{(1-t)^{r+1/2}}\,\exp\Bigl(-\frac\pi2\,\frac{1+t}{1-t}\,x^2\Bigr)\;. $$ 
The easy calculation
  $\int_{-\infty}^\infty \QQ^{\ep}(x;t_1)\QQ^{\ep}(x;t_2)\d x\,=\,\frac{\pi^{-1/2}\,\G(r+1/2)}{(1-t_1t_2)^{r+1/2}}$
implies that the $Q_n^{\ep}$ are orthogonal with $\|Q_n^{\ep}\|_{L^2(\r)}^2=\frac{\G(n+r+1/2)}{\G(1/2)\,n!}\,$.
In particular, these functions are orthogonal bases for $L^2_\ep(\R)$, and for each integer $0\le\k\le 3$ the multivariate 
functions
$$ Q_\bn(x)\= Q^{(\k)}_\bn(x)\=\prod_{1\le j\le\k} Q^-_{n_j}(x_j)\,\prod_{\k<j\le 3}Q^+_{n_j}(x_j)
   \qquad(\bn\in\Z_{\ge0})$$
form an orthogonal basis of the parity~$\k$ space $L^2_\k(\R^3)$ as defined in~\eqref{L2k}.  Instead of appealing 
to Proposition~\ref{prop:ufPEidentity}, we can then deal directly with the quadratic form \eqref{def:Q} by applying Lemma~11 of~\cite{Gon} to this orthonormal basis of~$L^2_\k(\R^3)$.
This leads after some computations to the problem of computing the eigenvalues of the sequence of 
$\binom{S+2}{2}\times\binom{S+2}{2}$ matrices $M_S=\bigl((\wt Q_\bm,\wt Q_\bn)_{L^2(\R)}\bigr)_{|\bm|=|\bn|=S}$, 
where $\wt Q\in L^2(\R)$ is defined by $\wt Q(x)=Q(x\,\1)$, with $\1=\tfrac1{\sqrt{3}}(1,1,1)$. 
In the rest of this section we show how to solve this problem.

The first point is that $M_S$ has the same non-zero eigenvalues (with multiplicities) as a much smaller matrix. To see this, we observe that $M_S$ factors as $A_S^*A_S$, where $A_S$ is the map defined 
by $Q\mapsto\wt Q$ from the  $\binom{S+2}{2}$-dimensional subspace $V_S=\langle Q_\bn\rangle_{|\bn|=S}$ 
of $L^2(\R^3)$ {to the $(S+1)$-dimensional space of $L^2(\R)$
$$
\wt V_{S}=\bigl\langle x^{\k+2n} e^{-\pi x^2/2}\bigr\rangle_{0\le n\le S}.
$$
}Hence $M_S$ has the same non-zero eigenvalues as the endomorphism $\wt M_S=A_SA_S^*$ of $\wt V_S$.  To compute 
them we will actually consider $\wt M_S$ acting in the larger subspace
$$
\wt{ \wt {V}}_{S} =\bigl \langle Q^\ep_n\bigr\rangle_{0\le n\le S+\k/2},
$$
where $\ep=(-1)^\k$ (i.e., we consider $A_S:V_S \to \wt{ \wt {V}}_{S}$ and take its adjoint $A_S^*$ in the 
larger subspace $\wt{ \wt {V}}_{S}$). Then we will show that the matrix representation of $\wt M_S$ is of the 
form~\eqref{defRS} for some special power series $F=F_{\k}$, but with $S$ replaced by $S+[\k/2]$.

To do this we use kernel functions.  The two 3-variable generating series 
\be\label{defPpm} \P^\ep(x,y;w) \;=\; \frac{e^{-\pi\frac{1+w}{1-w}\,\frac{x^2+y^2}2}}{\sqrt{1-w}} \;\times\; 
 \begin{cases} \cosh\bigl(\frac{2\pi xy\sqrt w}{1-w}\bigr) \ \ \text{ if } \ep=+1, \\ \frac1{\sqrt w}\,\sinh\bigl(\frac{2\pi xy\sqrt w}{1-w}\bigr)\ \ \text{ if } \ep=-1\end{cases}\ee
are the kernel functions for the multiplication maps on $L^2_\ep(\R)$ sending $Q_n^\ep$ to $w^nQ_n^\ep$ for all~$n\ge0$.
We can see this either {by} using formula~8.976-1 of~\cite{GR} to write
$$ \P^\ep(x,y;w) \= \sum_{n=0}^\infty \frac{Q_n^\ep(x)\,Q_n^\ep(y)}{\|Q_n^\ep\|^2}\,w^n\,,$$
or else directly by using the algebraic identity
$$  \frac{1+w}{1-w}\,\frac{x^2+y^2}2\,-\,\frac{2xy\sqrt w}{1-w}\+\frac{1+t}{1-t}\,\frac{y^2}2 
 \= \frac{1+wt}{1-wt}\,\frac{x^2}2 \+ \frac{1-wt}{(1-w)(1-t)}\,\Bigl(y-\frac{x(1-t)\sqrt w}{1-wt}\Bigr)^2 $$
to get (setting $p=[\k/2], \, r=\k-2p, \, \ep = (-1)^r \= (-1)^\k$)
\begin{align} &\int_{-\infty}^\infty \P^\ep(x,y;w)\,\QQ^\ep(y;t)\d y 
 \= \frac{w^{-r/2}\,e^{-\pi\frac{1+wt}{1-wt}\,\frac{x^2}2}}{\sqrt{1-w}\;(1-t)^{r+1/2}}\,
    \int_{-\infty}^\infty y^r\,e^{-\pi\frac{1-wt}{(1-w)(1-t)}\,\bigl(y-\frac{x(1-t)\sqrt w}{1-wt\vphantom{a_{y_y}}}\bigr)^2}\d y \\
  & \qquad\qquad \= \frac{w^{-r/2}\,e^{-\pi\frac{1+wt}{1-wt}\,\frac{x^2}2}}{\sqrt{1-w}\,(1-t)^{r+1/2}}\,
  \sqrt{\frac{(1-w)(1-t)}{1-wt}}\;\Bigl(\frac{x(1-t)\sqrt w}{1-wt}\Bigr)^r \= \QQ^\ep(y;wt)\,. \label{integral} \end{align}
(We have given {this second proof} because a more complicated form of the same calculation will be used in a moment.) It follows that the product 
$$\P^{(\k)}(x,y;w) =\prod_{1\le j\le \k}\P^-(x_j,y_j;w)\,\prod_{\k<j\le 3}\P^+(x_j,y_j;w)$$
is the multivariate kernel function for the multiplication map $Q_\bn^{(\k)}\mapsto w^{|\bn|}Q_\bn^{(\k)}$ on~$L^2_\k(\R^3)$ and
hence that the coefficient of~$w^S$ in its diagonal restriction
$$ \wt\P^{(\k)}(x,y;w) \= \P^{(\k)}(x\,\1,y\,\1;w) \= 
\P^+\Bigl(\frac x{\sqrt 3},\frac y{\sqrt 3};w\Bigr)^{3-\k}\,\P^-\Bigl(\frac x{\sqrt 3},\frac y{\sqrt 3};w\Bigr)^\k $$
is the kernel function of the endomorphism~$\wt M_S$ of~${\wt {\wt V}}_S$. 
From~\eqref{defPpm} we get the explicit formula{
$$ \wt\P^{(\k)}(x,y;w) \= \frac{e^{-\pi\frac{1+w}{1-w}\,\frac{x^2+y^2}2}}{w^{\k/2}\,(1-w)^{3/2}} \; 
 \sum_{m\in\{\pm3,\pm1\}} C^{(\k)}_m\,e^{\frac{2\pi mxy\sqrt w}{3(1-w)}}\,, $$
}where {$C_m^{(\k)}$ are defined by $\cosh^{3-\k}\!x\,\sinh^\k\!x=\sum C_m^{(\k)}\,e^{mx}$, or explicitly by $C_{\pm3}^{(\k)} = (\pm1)^\k /8$ and $C_{\pm1}^{(\k)} =(\pm1)^\k (3-2\k)/8$.}

To compute the action of~$\wt M_S$ on ~${\wt {\wt V}}_S$ we must compute the scalar product of~$\wt\P^{(\k)}(x,\,\cdot\,)$ and~$Q_n^{\ep}$,
which we again do by using generating functions and a Gaussian integral calculation. The calculation is exactly similar to the one given in~\eqref{integral}: using the algebraic identity
$$  \frac{1+w}{1-w}\,\frac{x^2+y^2}2\,-\,\frac{2mxy\sqrt w}{3(1-w)}\+\frac{1+t}{1-t}\,\frac{y^2}2 
 \= \frac{1+z_m}{1-z_m}\,\frac{x^2}2 \+ \frac{1-wt}{(1-w)(1-t)}\,\Bigl(y\,-\,\frac{mx(1-t)\sqrt w}{3(1-wt)}\Bigr)^2\,, $$
where 
  \be z_m \= w\,\frac{1-wt-m^2(1-t)/9}{1-wt-m^2w(1-t)/9} \= \begin{cases} \ \ \ \ \ \ \ wt   & \text{ if } m=\pm 3, \\ 1-\frac{(1-w)(1-wt)}{1-\frac89 wt-\frac19w}  & \text{ if } m\=\pm1,\end{cases} \ee
we find 
 $$\int_{-\infty}^\infty \P^{(\k)}(x,y;w)\,\QQ^{\ep}(y;t)\d y \={ \sum_{m\in\{\pm3,\pm1\}}} C^{(\k)}_m\, I^{(\k)}_m(x;w,t) $$
with  
\begin{align} 
 &I^{(\k)}_m(x;w,t) \= \frac{w^{-\k/2}\,e^{-\pi\frac{1+z_m}{1-z_m}\,\frac{x^2}2}}{(1-w)\,(1-t)^{r+1/2}}\,
     \int_{-\infty}^\infty y^r\,e^{-\pi\frac{1-wt}{(1-w)(1-t)}\bigl(y-\frac{mx(1-t)\sqrt w}{d(1-wt)}\bigr)^2}\d u \\
  & \qquad\quad \= \frac{w^{-p}\,e^{-\pi\frac{1+z_m}{1-z_m}\,\frac{x^2}2}}{1-w}\,\frac{(mx/3)^r}{(1-wt)^{r+1/2}}
   \= \frac{w^{-p}\,(m/3)^r}{1-w}\, \Bigl(\frac{1-z_m}{1-wt}\Bigr)^{r+1/2}\, \QQ^\ep(x;z_m) \,.  \end{align}
Now replacing each $Q_b^\ep(y)$ by~$u^b$ to keep track of the matrix coefficients, inserting the values of $C^{(\k)}_m$ and noticing that the resulting series is a power series in $w,u$ and $T=wt$, we find 
\be 
\wt M_{S}(a,b)  \=[T^a w^{S+p-a}u^b](F_\k )\; \ee
for $0\leq a,b\leq S+p$, where
\be
F_\k(T,w,u)\= \frac14\Bigl(\frac{1-z_3}{1-T}\Bigr)^{r+1/2}\;\frac{1}{1-z_3u} \,+\,  \frac{3-2\k}4 \Bigl(\frac{1-z_1}{1-T}\Bigr)^{r+1/2}\;\frac{(1/3)^r}{1-z_1u}.
\ee
But this function is special because it has the form \eqref{FfromH} with $(w_1,w_2,w_3)=(T,w,u)$ and
$$
H_\k (x,y) \= \frac14\,\frac1{ 1-y(x-1)} \+ \frac{3-2\k}{4\cdot3^r}\, \frac{(1-\tfrac89 x)^{1/2-r}}{1-\tfrac89 x-y(x-1)}\,.
$$
Therefore Proposition \ref{general} gives the eigenvalues of $\wt M_S$ as
\begin{align*}
e_n^{(\k)} & \=[x^ny^n](H_\k) \=[x^n]\Bigl(\frac{(x-1)^n}{4}\Bigl(1\+\frac{3-2\k}{3^r(1-8x/9)^{n+r+1/2}}\Bigr)\Bigr)  \\ 
& \= \frac14 \+ \frac14 \, \frac{3-2\k}{3^r} \, [v^n] 
{\Bigl(\tfrac1{\sqrt{1+14v/9+v^2}}\,\Bigl(\tfrac{1+v+\sqrt{1+14v/9+v^2}}2\Bigr)^{1/2-r} \Bigr)\,, }
\end{align*}
where in the second line we have used the substitution $v=x \, \frac{1-8x/9}{x-1}$ and the residue theorem, and it 
is easily checked that this equals the coefficient of $w^{2n+r}$ in the series $G_\k(w)$ defined in equation~\eqref{G0G1G2G3}.
It is also possible with this method to characterize the eigenspaces associated with the largest eigenvalues, but 
this requires a bit more work (especially in the case $\k=1$ where the space $V_\k(\a_\k) = V_1(\tfrac13)$ has rank ~2) 
and is not carried out here.


\begin{thebibliography}{99}
\bibitem{BBCH}
J. Bennett, N. Bez, A. Carbery and D. Hundertmark,
\newblock Heat-flow monotonicity of Strichartz norms,
\newblock Analysis and Partial Differential Equations 2 (2008), 147--158.

\bibitem{Ca}
E. Carneiro,
\newblock A sharp inequality for the Strichartz norm,
\newblock  Int. Math. Res. Notices (2009), 3127--3145.

\bibitem{Fo} 
D. Foschi, 
\newblock Maximizers for the Strichartz inequality, 
\newblock J. Eur. Math. Soc. 9 (2007), 739--774.

\bibitem{FO} 
D. Foschi and D. Oliveira e Silva,
Some recent progress on sharp Fourier restriction theory,  Anal. Math 43 (2017), 241--265.

\bibitem{Gon}
F. Gon\c{c}alves,
\newblock Orthogonal polynomials and sharp estimates for the Schr\"odinger equation,
\newblock Int. Math. Res. Not. (2017), 1--28.

\bibitem{Gon2}
F. Gon\c{c}alves,
\newblock A sharpened Strichartz inequality for radial functions,
\newblock Journal of Functional Analysis 276 (2019), 1925--1947.

\bibitem{GR}
I. S. Gradshteyn and I. M. Ryzhik,
\emph{Table of Integrals, Series, and Products.}
Translated from the Russian. Seventh edition. Elsevier/Academic Press, Amsterdam (2007).

\bibitem{HZ} 
D. Hundertmark and V. Zharnitsky, 
\newblock On sharp Strichartz inequalities in low dimensions, 
\newblock Int. Math. Res. Not. (2006), 1--18.


\bibitem{OT} T. Ozawa, Y. Tsutsumi, Space-time estimates for null gauge forms and nonlinear Schr\"odinger equations, Differential Integral Equations 11 (1998), 201-222.

\bibitem{Tao}
T. Tao,
\newblock {\it Nonlinear Dispersive Equations: Local and Global Analysis}, 
\newblock CBMS Regional Conference Series in Mathematics 106 (2006), 373 pp. 

\end{thebibliography}
\end{document}